\documentclass[a4paper,11pt]{article}
 \usepackage[english]{babel}
\usepackage[a4paper]{geometry}
\usepackage{amsmath}
\usepackage{amsthm}
\usepackage{amssymb}
\usepackage{bbm}
\usepackage{color}
\usepackage[dvipsnames]{xcolor}

\usepackage{tikz}
\usetikzlibrary{positioning}
\usetikzlibrary{arrows}
\usepackage{caption}
\usepackage{subcaption}
 
\usepackage{mathrsfs}
\usepackage{enumerate}
 
\usepackage{hyperref} 

\def\bE{\mathbb{E}}
\def\bP{\mathbb{P}}

\def\cF{\mathcal{F}}

\DeclareMathOperator{\Var}{Var}

\def\tbf{\textbf }

\def\be{\begin{equation}}
\def\bEe{\end{equation}}

\newtheorem{theorem}{Theorem}   

\newtheorem{prop}[theorem]{Proposition}
\newtheorem{lemma}[theorem]{Lemma}

\begin{document}
\title{The Order of Free Energy Fluctuations in the Critical Sherrington--Kirkpatrick Model Revisited}
\author{Adrien Schertzer \thanks{Department of mathematics, ENS Lyon and CNRS, Lyon, France} }
\maketitle

\begin{abstract}
We study the fluctuations of the free energy of the Sherrington--Kirkpatrick
model at the critical inverse temperature \(\beta=1\). We prove that
\[
\operatorname{Var}(F_N(1))\leq \frac14\log N+C,
\]
with \(C\) independent of \(N\). This gives a logarithmic upper bound, in
agreement with the order predicted in the physics literature. We also prove
that the critical variance diverges: more precisely,
\[
\operatorname{Var}(F_N(1))
\geq
\frac12\log\log\log N - C .
\]
\end{abstract}


\section{Introduction and main results} 

Consider spin configurations $\sigma=( \sigma_1,\sigma_2,\ldots,\sigma_N) \in \{-1,1\}^N$ whose energies are determined by the Sherrington-Kirkpatrick (SK) \cite{SK} Hamiltonian $H_N:\{-1,1\}^N\to\mathbb{R}$ 
		\begin{equation} \label{eq:HN}
		H_N(\sigma) =    \sum_{1\leq i < j\leq N} g_{ij} \sigma_{i}\sigma_j = \frac{1}{2} (\sigma, \tbf{G}\sigma ).
		\end{equation}
Here \(G\) is a symmetric random matrix with \(g_{ii}=0\), and with \((g_{ij})_{i<j}\) independent centered Gaussian random variables of variance \(N^{-1}\). We assume the $(g_{ij})_{ i<j}$ to be realized in a probability space $(\Omega, \cF, \bP)$ and denote the expectation w.r.t.\ them by $\mathbb{E}(\cdot)$. The free energy $F_N(\beta)$ at inverse temperature $\beta\geq 0$ is defined by
		\be\label{eq:defenergy} F_N(\beta)=  \log Z_N(\beta) \hspace{0.5cm} \text{for} \hspace{0.5cm} Z_N(\beta) = \frac{1}{2^N}\sum_{\sigma \in \{-1,1\}^N}  e^{\beta H_N(\sigma)} \bEe
and by $\langle \cdot \rangle $ we denote from now on the Gibbs measure induced by $H_N$. It is defined so that for every observable $ f: \{-1,1\}^N \to \mathbb{R} $, we have
		\be\label{eq:defGibbs}  \langle f \rangle_{\beta} = \frac{1}{Z_N(\beta)}\frac{1}{2^N} \sum_{\sigma \in \{-1,1\}^N}  f(\sigma) \,e^{\beta H_N(\sigma)}. \bEe
		It was proved by Guerra and Toninelli \cite{GuerraToninelli} that, almost surely,
\[
\lim_{N\to\infty}\frac{F_N(\beta)}{N}
=
\lim_{N\to\infty}\frac{\mathbb E F_N(\beta)}{N}.
\]
This limit is described by the Parisi formula \cite{Par1, Par2, Gue, Tal}. The value \(\beta_c=1\) marks the critical inverse temperature separating the high-temperature  regime from the low-temperature regime. For a thorough introduction to the model, see e.g.\ \cite{Tal1, Tal2, P}. In the high-temperature regime $\beta \leq 1$, the quenched free energy coincides with the annealed free energy in the thermodynamic limit:
\[
\lim_{N\to\infty}\frac{1}{N}\mathbb{E}\log Z_N(\beta)=\lim_{N\to\infty}\frac{1}{N}\log \mathbb{E}Z_N(\beta)=\frac{\beta^2}{4}.
\]
By contrast, in the low-temperature regime, for $\beta>1$, we have 
\[
\lim_{N\to\infty}\frac{1}{N}\mathbb{E}\log Z_N(\beta) < \lim_{N\to\infty}\frac{1}{N}\log \mathbb{E}Z_N(\beta).
\]
Determining the free energy fluctuations of the SK model at the critical inverse temperature \(\beta=1\) is a difficult open problem. 

\begin{itemize}
    \item \textbf{High temperature.}
    Aizenman, Lebowitz, and Ruelle \cite{ALR} proved that, for every fixed
    $\beta<1$, the centered free energy converges in distribution to a
    Gaussian random variable. In particular, $\Var(F_N(\beta))$ converges to a
    finite limit.

    \item \textbf{Near the critical inverse temperature.}
    More recently, Dey and Kang \cite{DeyKang} studied the regime in which the
    critical point is approached from the high-temperature side. In particular,
    for every fixed $c>0$, they proved that
    \[
    \Var\bigl(F_N(1-cN^{-1/3})\bigr)
    =
    \frac16\log N+O(1).
    \]

    \item \textbf{At the critical inverse temperature.}
    At the critical point, Chen and Lam \cite[Theorem~1.1]{CL} proved the upper
    bound
    \[
    \Var(F_N(1)) \leq C\bigl((\log N)^2+1\bigr)
    \]
    for some constant $C>0$. At arbitrary fixed inverse temperature,
    Chatterjee \cite{Cha} established the superconcentration bound
    \[
    \Var(F_N(\beta))\leq C(\beta)\frac{N}{\log N}.
    \]
\end{itemize}

\subsection{Main results}

In this paper, we provide new bounds on the variance of the critical free energy
\(\operatorname{Var}(F_N(1))\). The physics literature predicts the asymptotic
behavior
\begin{equation}\label{eq:conj}
\operatorname{Var}(F_N(1))=\frac16\log N+O(1),
\end{equation}
see \cite{Asp,PR}. This remains open rigorously. Our first result proves the expected logarithmic order as an upper bound,
although with the constant \(1/4\) instead of the conjectural constant \(1/6\). In particular, it improves the previously known \((\log N)^2\) upper bound of Chen and Lam \cite{CL}.

\begin{theorem}[Upper bound]\label{thm:main}
There exists a constant \(C>0\), independent of \(N\), such that
\[
\operatorname{Var}\big(F_N(1)\big)
\leq
\frac14 \log N + C .
\]
\end{theorem}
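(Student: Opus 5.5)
Our plan is to interpolate in temperature and control the variance through the overlap. We write $\Var(F_N(1))$ as the endpoint of the curve $t\mapsto \Var(F_N(\beta_t))$ and start from a subcritical point $\beta_0=1-N^{-1/3}$, or more simply from $\beta_0=1-\varepsilon$ with $\varepsilon$ to be tuned. There the high-temperature analysis of Aizenman--Lebowitz--Ruelle and Dey--Kang says the variance is at most $\frac16\log N+O(1)$ in the critical window, or bounded for fixed $\beta<1$.

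The cleaner route is the Gaussian interpolation (heat-flow, or Chatterjee) formula for the variance. Write $F=F_N(1)$ as a function of the Gaussian vector $(g_{ij})$, and let $g^t=e^{-t}g+\sqrt{1-e^{-2t}}\,g'$ be the Ornstein--Uhlenbeck interpolation. Then
\[
\Var(F)=\int_0^\infty e^{-t}\,\bE\bigl[\nabla F(g)\cdot\nabla F(g^t)\bigr]\,dt ,
\qquad
\partial_{g_{ij}}F=\langle\sigma_i\sigma_j\rangle ,
\]
so that
\[
\nabla F(g)\cdot\nabla F(g^t)=\tfrac{N}{2}\,\bE\,\bigl\langle R_{12}^2\bigr\rangle_{g,g^t}-\tfrac12+O(N^{-1}),
\]
where $R_{12}$ is the overlap between a replica under $g$ and a replica under $g^t$. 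The task is therefore to bound $N\,\bE\langle R_{12}^2\rangle$ for two coupled systems at $\beta=1$ with correlation $e^{-t}$.

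The heuristic is that at criticality two systems with correlation $\rho=e^{-t}$ behave like a subcritical system at effective inverse temperature $\rho<1$. For such a system
\[
N\,\bE\langle R_{12}^2\rangle\lesssim \frac{1}{1-\rho^2}\wedge N^{1/2},
\]
the cap $N^{1/2}$ being the critical scale of $N\langle R^2\rangle$, since $R\sim N^{-1/4}$ at criticality. Integrating gives
\[
\int_0^{\infty} e^{-t}\min\Bigl(\frac{1}{2t},\,N^{1/2}\Bigr)\,dt\approx \tfrac12\log N^{1/2}=\tfrac14\log N .
\]
This accounts for the constant $\frac14$, which differs from $\frac16$ because the cap is used crudely.

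The key steps, in order, are:
\begin{enumerate}
\item Derive the variance identity above.
\item Prove the uniform bound $N\,\bE\langle R_{12}^2\rangle_{\beta=1}\le C\sqrt N$ for a single system (same disorder). This follows from known critical overlap moment estimates, or from a cavity/quadratic-replica argument comparing with $\beta<1$ via monotonicity in $\beta$.
\item Prove the decorrelated bound $N\,\bE\langle R_{12}^2\rangle_{g,g^t}\le \frac{1}{1-e^{-2t}}+C$.
\end{enumerate}

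For step 3 we would compute $\frac{d}{d\beta}$ of the cross-overlap moment along the path $\beta\in[0,1]$ using Gaussian integration by parts, in the spirit of Talagrand's proof of $\bE\langle R^2\rangle\le C/(N(1-\beta^2))$. In the coupled system the disorder cross-covariance is $\rho\,R_{12}^2$, so the self-consistent inequality takes the form
\[
\bE\langle R_{12}^2\rangle\le \frac1N+\rho^2\,\bE\langle R_{12}^2\rangle+\text{error}.
\]
The error is bounded by higher overlap moments, which step 2 controls.

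We expect step 3 to be the main obstacle. The error terms involve fourth moments $N^2\bE\langle R^4\rangle$ and mixed overlaps with same-disorder replicas, which at $\beta=1$ are of order $N\,\bE\langle R^2\rangle\sim\sqrt N$. It must be shown that they contribute only $O(1)$ after multiplication by the relevant factors. We would first try a smart-path interpolation in $\rho$ rather than $\beta$, writing
\[
\frac{d}{d\rho}\,\bE\langle R_{12}^2\rangle=\frac N2\,\bE\bigl\langle \text{(quartic overlap terms)}\bigr\rangle ,
\]
and bounding those quartic terms by Cauchy--Schwarz against step 2. If the error is only bounded by $C\rho^2 N^{1/2}/N$, it is still integrable against $e^{-t}dt$, and the claimed $\frac14\log N+C$ follows.
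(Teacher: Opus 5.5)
Your route (Ornstein--Uhlenbeck variance identity plus overlap bounds) is entirely different from the paper's, and it breaks at step~3, which is not merely hard but false as stated. The cross-covariance you identify, $\rho\,\frac N2R_{12}^2$ with $\rho=e^{-t}$, makes Gaussian integration by parts produce the coefficient $\beta^2\rho=\rho$, not $\rho^2$. The effective inverse temperature is therefore $\sqrt\rho$, and one expects $N\mathbb{E}\langle R_{12}^2\rangle_{g,g^t}\approx\frac1{1-\rho}$. At fixed $\beta<1$ this is confirmed by Aizenman--Lebowitz--Ruelle: inserting $\frac1{1-\beta^2\rho}$ into your identity gives $\frac{\beta^2}{2}\int_0^1\bigl(\frac1{1-\beta^2u}-1\bigr)\,du=\frac12\bigl(-\log(1-\beta^2)-\beta^2\bigr)$, the correct limiting variance, while $\frac1{1-\beta^2\rho^2}$ does not. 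At $\beta=1$ your bound is contradicted by the cycle estimates in the proof of Theorem~\ref{thm:critical-lower-bound}. Expand $\mathbb{E}[\nabla F(g)\cdot\nabla F(g^t)]=\sum_k k\rho^{k-1}\|J_kF\|^2$, where $J_k$ is the projection on the $k$-th Wiener chaos, and use $\|J_kF\|^2\ge\frac1{2k}(1-o(1))$ for $3\le k\le\log\log N$. This gives $\liminf_N N\mathbb{E}\langle R_{12}^2\rangle_{g,g^t}\ge 1+\frac{\rho^2}{1-\rho}$ for each fixed $\rho$, which exceeds $\frac1{1-\rho^2}+C$ when $\rho$ is close to $1$. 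Your final integral also drops the factor $\frac12$ in $\frac N2\mathbb{E}\langle R_{12}^2\rangle$. The two slips cancel, which is why you land on $\frac14$; step~3 as written would ``give'' $\frac18\log N$, below the conjectured $\frac16\log N$.

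With the corrected target $N\mathbb{E}\langle R_{12}^2\rangle\le\frac1{1-\rho}+C$ and your cap $C\sqrt N$, the arithmetic would give $\frac14\log N+C$. The cap does transfer to the cross-overlap, since the chaos expansion above is decreasing in $t$. But you would need a critical disorder-chaos estimate with leading constant exactly $1$ and an additive error integrable in $\rho$, uniformly down to $1-\rho\approx N^{-1/2}$. A bound $\frac{C}{1-\rho}$ with $C>1$ only yields $\frac C4\log N$. Your error control cannot deliver this. An error $\varepsilon$ in the self-consistent inequality for $X=\mathbb{E}\langle R_{12}^2\rangle$ becomes $\frac{N\varepsilon}{1-\rho}$ in $NX$, so you need $\varepsilon=O((1-\rho)/N)$. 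An error $\varepsilon\approx\rho^2N^{-1/2}$ produces $\frac{C\sqrt N}{1-\rho}$, which integrates to order $\sqrt N\log N$.

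The idea you are missing is that no overlap analysis is needed. Since $\log W_N=F_N(1)-\log\mathbb{E}Z_N(1)$ is convex in the disorder, Chen's theorem makes $\Psi(\lambda)=\frac1\lambda\log\mathbb{E}W_N^\lambda$ convex. Moreover $\Psi(1)=0$ and $\Psi'(0)=\frac12\operatorname{Var}(F_N(1))$, so comparing with the secant slope on $[1,2]$ gives $\operatorname{Var}(F_N(1))\le\log\mathbb{E}W_N^2$. Finally, $\mathbb{E}W_N^2$ is $e^{-1/2}$ times the critical Curie--Weiss partition function, which is of order $N^{1/4}$ by a Laplace-method computation.
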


The proof is based on the second-moment estimate
\[
\operatorname{Var}\big(F_N(\beta)\big)
\leq
\log
\frac{\mathbb E\big[ Z_N(\beta)^2\big]}
{\big(\mathbb E Z_N(\beta)\big)^2}.
\]
We prove this inequality in the next section for the Sherrington--Kirkpatrick
model. The argument uses only convexity and a comparison between the first and
second moments of the partition function, and should therefore apply to a
broader class of Gaussian disordered systems. We also note that the Curie--Weiss comparison is robust enough to extend slightly beyond the critical point, giving near-critical low-temperature bounds
in the same spirit as Chen and Lam. We do not emphasize this extension, since the main focus of the present note is the behavior exactly at criticality. Our second result gives a lower bound showing that the critical variance diverges.

\begin{theorem}[Lower bound]\label{thm:critical-lower-bound}
Let \(M_N=\lfloor\log\log N\rfloor\). Then
\[
\operatorname{Var}\big(F_N(1)\big)
\geq
\frac12\sum_{k=3}^{M_N}\frac1k - o(1).
\]
In particular, there exists a constant \(C>0\) such that
\[
\operatorname{Var}\big(F_N(1)\big)
\geq
\frac12\log\log\log N - C .
\]
\end{theorem}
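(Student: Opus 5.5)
Our approach is to extract from \(F_N(1)\) a collection of Gaussian-like linear statistics of the disorder, the cycle counts, and to compare the variance of \(F_N(1)\) with the variance of its projection onto these statistics. For \(k\ge 3\) let
\[
C_k=\sum_{i_1,\dots,i_k \text{ distinct}} g_{i_1i_2}g_{i_2i_3}\cdots g_{i_ki_1}
\]
be the (normalized) \(k\)-cycle statistic, essentially \(\operatorname{tr}G^k\) with backtracking removed. These are centered, mutually orthogonal in \(L^2(\mathbb P)\), with \(\operatorname{Var}(C_k)\to 2k\). In the Aizenman--Lebowitz--Ruelle picture at \(\beta<1\), \(F_N(\beta)-\mathbb E F_N(\beta)\approx\sum_{k\ge3}\frac{\beta^k}{2k}C_k\), whose variance is \(\sum_k \frac{\beta^{2k}}{2k}\). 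At \(\beta=1\) this sum diverges logarithmically. Truncating at \(k\le M_N\) should give the stated bound \(\frac12\sum_{k=3}^{M_N}\frac1k\), provided we can show \(\operatorname{Cov}(F_N(1),C_k)\approx \frac{1}{2k}\operatorname{Var}(C_k)\approx 1\) uniformly for \(k\le M_N\).

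\textbf{Projection step.} Set \(X=\sum_{k=3}^{M_N} a_k \bar C_k\) with \(\bar C_k=C_k/\sqrt{\operatorname{Var} C_k}\). By Cauchy--Schwarz (Bessel's inequality for the orthonormal family \(\bar C_k\)),
\[
\operatorname{Var}F_N(1)\ge \sum_{k=3}^{M_N}\operatorname{Cov}(F_N(1),\bar C_k)^2 .
\]
It therefore suffices to prove \(\operatorname{Cov}(F_N(1),\bar C_k)^2 = \frac{1}{2k}+o(1/k)\), with the errors summable over \(k\le M_N\).

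\textbf{Covariance computation via Gaussian integration by parts.} Since \(C_k\) is a multilinear polynomial of degree \(k\) in the \(g\)'s, repeated Gaussian integration by parts (equivalently, the Wiener chaos expansion) gives
\[
\mathbb E[F_N C_k]=N^{-k}\sum_{\text{cycles}}\mathbb E\bigl[\partial_{g_{i_1i_2}}\cdots\partial_{g_{i_ki_1}}F_N\bigr].
\]
The \(k\)-th mixed derivative of \(\log Z\) along distinct edges is a joint cumulant of \(\beta\sigma_{i_1}\sigma_{i_2},\dots\) under \(\langle\cdot\rangle_1\). Its leading term is \(\langle \sigma_{i_1}\sigma_{i_2}\cdots\rangle\)-type products, which telescope around the cycle to \(1\) plus corrections involving truncated correlations. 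Summing over the roughly \(N^k/(2k)\) cycles gives \(\operatorname{Cov}(F_N,C_k)=1+\text{error}\). Since \(\operatorname{Var}C_k\approx 2k\), this yields the desired \(\operatorname{Cov}(F_N,\bar C_k)^2\approx 1/(2k)\).

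\textbf{Main obstacle.} The hard part is controlling the cumulant error terms at \(\beta=1\) uniformly in \(k\le\log\log N\). At criticality, overlaps are only bounded by \(\langle R_{12}^2\rangle\lesssim N^{-1/2}\) rather than \(N^{-1}\), so the truncated correlations are not negligible a priori. We would first try to bound the expected \(k\)-th cumulant by \(k!\,C^k\) times powers of \(\mathbb E\langle R_{12}^{2}\rangle\), using known critical overlap bounds (as in Chen--Lam or the second-moment/Curie--Weiss comparison behind Theorem~\ref{thm:main}). With \(k\le\log\log N\), factors like \(k!C^k\le(\log N)^{o(1)}\) are absorbed by a polynomial gain \(N^{-c}\). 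This absorption is precisely why the truncation \(M_N=\lfloor\log\log N\rfloor\) is chosen so conservatively.

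If direct control fails, the fallback is to compute the covariance at \(\beta_N=1-N^{-\epsilon}\), where the high-temperature expansion is valid, and transfer to \(\beta=1\). The transfer would bound \(\mathbb E|F_N(1)-F_N(\beta_N)-\mathbb E(\cdot)|^2\) via \(\partial_\beta F_N=\langle H_N\rangle\) and the Curie--Weiss comparison. Finally, \(\sum_{k=3}^{M}\frac1k\ge\log M-C\) gives the \(\frac12\log\log\log N - C\) form.
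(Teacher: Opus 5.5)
Your architecture is the paper's. By exchangeability, $\mathbb E\,\partial^k_\gamma\log Z_N(1)$ is the same for every $\gamma\in\mathcal C_{N,k}$. So your Bessel bound $\sum_k\operatorname{Cov}(F_N(1),\bar C_k)^2$ equals exactly the cycle part $\sum_k\sum_{\gamma\in\mathcal C_{N,k}}(\mathbb E\,\partial^k_\gamma\log Z_N(1))^2$ of Chatterjee's chaos expansion that the paper uses. Your observation that the one-block cumulant term telescopes to $1$ is also the paper's.

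The gap is that the step carrying the whole proof,
\[
\mathbb E\,\partial^k_\gamma\log Z_N(1)=N^{-k/2}\bigl(1+\varepsilon_{N,k,\gamma}\bigr),\qquad \sup_{3\le k\le M_N}\sup_{\gamma}|\varepsilon_{N,k,\gamma}|\to0,
\]
is only flagged as the ``main obstacle'' with a hoped-for bound. Nothing in the proposal yet shows $\operatorname{Cov}(F_N(1),C_k)\to1$. The missing mechanism is short:
\begin{enumerate}
\item Every partition $\pi$ with $|\pi|\ge2$ contains a block $B\subsetneq\{1,\dots,k\}$. Since $B$ is a proper non-empty set of edges of a \emph{simple} cycle, some vertex is covered by exactly one edge of $B$, so $\prod_{\ell\in B}X_\ell=\sigma_A$ with $A\neq\emptyset$.
\item Bounding the other blocks by $1$, each error term is at most $\mathbb E|\langle\sigma_A\rangle|\le(\mathbb E\langle\sigma_A\rangle^2)^{1/2}$.
\item Write $\mathbb E\langle\sigma_A\rangle^2=\mathbb E\langle\prod_{i\in A}\sigma^1_i\sigma^2_i\rangle$. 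Every term in the expansion of $\mathbb E\langle R_{12}^{|A|}\rangle$ has the form $\mathbb E\langle\sigma_{A'}\rangle^2\ge0$. Exchangeability then gives $\mathbb E\langle\sigma_A\rangle^2\le C\,\mathbb E\langle R_{12}^{|A|}\rangle\le C\,\mathbb E\langle R_{12}^2\rangle\le CN^{-1/2}$ for $|A|\le\sqrt N$. The last inequality is Talagrand's critical overlap estimate, which the paper imports.
\item Counting partitions weighted by $(|\pi|-1)!$ gives $|\varepsilon_{N,k,\gamma}|\le Ck^{k+1}k!\,N^{-1/4}$. This is not of the form $k!C^k$, but it is still $o(1)$ uniformly for $k\le\log\log N$.
\end{enumerate}
This also answers your worry that truncated correlations are not negligible a priori. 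They never need to be small; it is enough that each non-leading term contains one factor $\langle\sigma_A\rangle$ with $A\neq\emptyset$.

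Two of your suggested inputs would not do the job. First, the polynomial gain must come from a quenched overlap bound such as Talagrand's. The Curie--Weiss second-moment computation behind Theorem~\ref{thm:main} controls only $\mathbb E Z_N^2/(\mathbb E Z_N)^2$ and does not by itself give $\mathbb E\langle R_{12}^2\rangle_1\le CN^{-c}$. Second, the fallback moves the difficulty rather than removing it. Heuristically, $F_N(1)-F_N(\beta_N)\approx\sum_k\frac{1-\beta_N^k}{2k}C_k$ has variance of order $\log N$ unless $1-\beta_N$ is of order $N^{-1/3}$, the width of the critical window. Nothing in the paper controls $\mathbb E|F_N(1)-F_N(\beta_N)-\mathbb E(\cdot)|^2$; in particular the convexity argument fails, since a difference of convex functions need not be convex. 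Control inside the window is precisely the information that is unavailable.
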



The logarithmic divergence in Theorem~\ref{thm:critical-lower-bound} comes
from large simple loops. We expect the same mechanism to remain valid up to the
scale \(M_N=N^{1/3}\), which corresponds to the expected critical window. The
present proof, however, only treats the much smaller scale
\(M_N=\lfloor \log\log N\rfloor\), since extending the argument to polynomially
growing \(M_N\) would require substantially sharper control of the accumulated
error terms.

\subsection{Relation with overlaps}

Following the discussion in \cite{CL}, we now recall the usual interpretation of replica symmetry in terms of
overlaps. Given two independent samples \(\sigma,\rho\) from the same
Gibbs measure, their overlap is defined by
\[
R_{\sigma \rho}=R(\sigma,\rho)
:=\frac1N\sum_{i=1}^N \sigma_i \rho_i .
\]
In the high-temperature replica-symmetric regime, two independent replicas are
asymptotically orthogonal, in the sense that
\[
\lim_{N\to\infty}\mathbb E\langle R_{\sigma \rho}^2\rangle_{\beta}=0.
\]
By contrast, in the low-temperature regime \(\beta>\beta_c\), the annealed free
energy no longer gives the correct quenched free energy, and the overlap remains
non-trivial. In particular, for the SK model one has
\[
\liminf_{N\to\infty}\mathbb E\langle R_{\sigma \rho}^2\rangle_{\beta}>0 .
\]

At the critical temperature, the precise scale of the overlap is a delicate
question. In \cite[Chapter~11]{Tal2}, Talagrand conjectures, largely motivated
by a conversation with G.~Parisi, that
\begin{equation}\label{eq:conj2}
\lim_{N\to\infty} N^{2/3}\,
\mathbb E\langle R_{\sigma \rho}^2\rangle_{1}=a
\end{equation}
exists for some constant \(a\in(0,\infty)\). We next recall and establish a
direct connection between fluctuations of the free energy and overlaps. Let
\(\langle\cdot\rangle_{\beta,t}\) denote the Gibbs measure on pairs of
configurations
\((\sigma,\rho)\in\{-1,1\}^N\times\{-1,1\}^N\) associated with the Hamiltonian
\begin{equation} \label{eq:ChaH}
\begin{split}
H_{N,t}(\sigma,\rho)
&=
\sum_{1\leq i<j\leq N}
\big(\sqrt t\, g_{ij}+\sqrt{1-t}\,g'_{ij}\big)\sigma_i\sigma_j  \\
&\qquad
+
\sum_{1\leq i<j\leq N}
\big(\sqrt t\, g_{ij}+\sqrt{1-t}\,g''_{ij}\big)\rho_i\rho_j  \\
&=
\frac12\big(\sigma,(\sqrt t\,\mathbf G+\sqrt{1-t}\,\mathbf G')\sigma\big)
+
\frac12\big(\rho,(\sqrt t\,\mathbf G+\sqrt{1-t}\,\mathbf G'')\rho\big),
\end{split}
\end{equation}
where \(\mathbf G,\mathbf G'\) and \(\mathbf G''\) are independent GOE-type
matrices with the same normalization as in \eqref{eq:HN}. Thus the two replicas
are sampled in two disorders which share a common fraction \(t\) of the
Gaussian environment. Chatterjee's variance formula gives
\begin{equation}\label{eq:Chaos}
\operatorname{Var}(F_N(\beta))
=
\frac{N\beta^2}{2}
\int_0^1
\mathbb E\big\langle R_{\sigma,\rho}^2\big\rangle_{\beta,t}\,dt
-
\frac{\beta^2}{2}.
\end{equation}
Thus fluctuations of the free energy are naturally controlled by overlaps,
although the overlap appearing in \eqref{eq:Chaos} is a disorder-chaos overlap
rather than the usual overlap between two replicas in the same environment.
Nevertheless, we prove the following comparison.

\begin{prop}\label{prop:variance-overlap}
For every \(\beta\leq 1\),
\begin{equation}\label{eq:variance-overlap}
\operatorname{Var}(F_N(\beta))
\leq
N \int_0^\beta
s \mathbb E\langle R_{\sigma \rho}^2\rangle_s
\,ds -\frac{\beta^2}{2}.
\end{equation}
\end{prop}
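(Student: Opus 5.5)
The route is to reduce \eqref{eq:variance-overlap} to a comparison between two ``annealing gaps''. First I compute the right-hand side explicitly. By Gaussian integration by parts, $\frac{d}{d\beta}\mathbb E F_N(\beta)=\mathbb E\langle H_N\rangle_\beta$, and
\[
\mathbb E\langle H_N\rangle_\beta=\beta\Big(\tfrac{N-1}{2}-\tfrac{N\,\mathbb E\langle R_{\sigma\rho}^2\rangle_\beta-1}{2}\Big)=\tfrac{\beta N}{2}\big(1-\mathbb E\langle R_{\sigma\rho}^2\rangle_\beta\big),
\]
using $\sum_{i<j}\sigma_i\sigma_j\rho_i\rho_j=(N^2R_{\sigma\rho}^2-N)/2$. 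Integrating in $\beta$ gives
\[
\mathbb E F_N(\beta)=\tfrac{N\beta^2}{4}-\tfrac N2\int_0^\beta s\,\mathbb E\langle R_{\sigma\rho}^2\rangle_s\,ds .
\]
On the other hand, $\log\mathbb E Z_N(\beta)=\beta^2(N-1)/4$. Therefore the right-hand side of \eqref{eq:variance-overlap} is exactly $2\big(\log\mathbb E Z_N(\beta)-\mathbb E F_N(\beta)\big)$. So the proposition is equivalent to
\[
\operatorname{Var}(F_N(\beta))\le 2\big(\log \mathbb E Z_N(\beta)-\mathbb E\log Z_N(\beta)\big).
\]
For log-normal $Z$ this holds with equality, which makes me fairly confident this is the intended form.

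To prove this inequality I would interpolate the disorder. For $t\in[0,1]$, set
\[
X_t:=\log \mathbb E_{G'}Z_N\big(\beta;\sqrt t\,\mathbf G+\sqrt{1-t}\,\mathbf G'\big),
\]
so that $X_0=\log\mathbb E Z_N(\beta)$ and $X_1=F_N(\beta)$. The key observation is that the $G'$-average can be performed explicitly on each configuration. Since $\mathbb E'e^{\beta\sqrt{1-t}H'(\sigma)}=e^{\beta^2(1-t)(N-1)/4}$ does not depend on $\sigma$, we get
\[
X_t=F_N(\beta\sqrt t)+\tfrac{\beta^2(1-t)(N-1)}{4}.
\]
Hence $\mathbb E X_t$ is computed by the first paragraph with $\beta$ replaced by $\beta\sqrt t$. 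Moreover, $(X_t)$ is the Doob martingale $\mathbb E[\cdot\mid\mathcal G_t]$ only at the level of $Z$, not of $\log Z$, so I will instead work with $V(t):=\operatorname{Var}(\mathbb E[F_N(\beta)\mid \mathbf G_t])$ along the Ornstein--Uhlenbeck/Chatterjee interpolation. Equivalently, I start from Chatterjee's formula \eqref{eq:Chaos} and substitute $t=s^2/\beta^2$, which gives
\[
\operatorname{Var}(F_N(\beta))=N\int_0^\beta s\,\mathbb E\langle R^2_{\sigma\rho}\rangle_{\beta,s^2/\beta^2}\,ds-\tfrac{\beta^2}{2}.
\]
Comparing with \eqref{eq:variance-overlap}, it suffices to prove the pointwise overlap comparison
\[
\mathbb E\langle R^2_{\sigma\rho}\rangle_{\beta,t}\le \mathbb E\langle R^2_{\sigma\rho}\rangle_{\beta\sqrt t}\qquad(0\le t\le 1).
\]
In words, a disorder-chaos pair at inverse temperature $\beta$ with shared fraction $t$ overlaps less than two true replicas at the reduced temperature $\beta\sqrt t$, which is where the shared part of the field alone lives.

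This comparison is the main obstacle. The right-hand side is the overlap under the measure obtained by averaging the numerator and denominator of each Gibbs weight over $G'$ and $G''$ separately (that is, the Gibbs measure of $\mathbb E_{G'}Z$, by the identity above). The left-hand side instead averages the ratio $\langle\cdot\rangle_{\beta,t}$. I would first try to bridge the two with a Jensen/convexity argument in the independent fields $G',G''$ conditional on $G$. Concretely, I would interpolate $G'\mapsto\sqrt u\,G'$ for $u\in[0,1]$ and show that $u\mapsto\mathbb E\langle R^2\rangle$ is nonincreasing. By Gaussian integration by parts, the $u$-derivative is a combination of four-replica overlap terms. I expect the sign to follow from writing it as $-\mathbb E\big\langle (R_{12}-\text{its conditional mean})^2\big\rangle$-type expressions, using that $G'$ and $G''$ act on different replicas. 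At $u=0$ this gives exactly $\mathbb E\langle R^2\rangle_{\beta\sqrt t}$, after absorbing the constant factors by the identity above.

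If the direct monotonicity fails, the fallback is to prove $\operatorname{Var}(F)\le 2(\log\mathbb E Z-\mathbb E F)$ by differentiating both sides along the path $t\mapsto X_t$. The plan there is to compare $\frac{d}{dt}\operatorname{Var}(\mathbb E[F\mid\mathbf G_t])$ with $-2\frac{d}{dt}\mathbb E X_t$, both of which are $N\beta^2/4$ times overlap expectations plus $O(\beta^2)$ constants. This reduces to the same overlap inequality in differential form.
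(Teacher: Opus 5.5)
Your reduction is correct and coincides with the second half of the paper's proof. By Gaussian integration by parts, the right-hand side of the proposition equals $2(\log\mathbb E Z_N(\beta)-\mathbb E F_N(\beta))$, so everything rests on $\operatorname{Var}(F_N(\beta))\le 2(\log\mathbb E Z_N(\beta)-\mathbb E F_N(\beta))$. This is where the proposal has a genuine gap.

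Via Chatterjee's formula you replace that inequality by the pointwise comparison $\mathbb E\langle R_{\sigma\rho}^2\rangle_{\beta,t}\le\mathbb E\langle R_{\sigma\rho}^2\rangle_{\beta\sqrt t}$ for every $t\in[0,1]$. This is strictly stronger than what is needed, since only the $t$-integrals must compare. You give no proof of it: that the $u$-derivative is minus a square is only a hope, and nothing supports it.

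To see this, view $F=F_N(\beta)$ as a function of the standard Gaussian coordinates of the disorder. Let $Y$ be the shared part $\sqrt t\,\mathbf G$ in these coordinates, and let $P_s$ be Gaussian smoothing with variance $s=u(1-t)$. Your interpolated quantity is $\mathbb E|P_s\nabla F(Y)|^2=\frac{N\beta^2}{2}\mathbb E\langle R_{\sigma\rho}^2\rangle-\frac{\beta^2}{2}$. Since the SK model satisfies $\Delta F=\frac{\beta^2(N-1)}{2}-|\nabla F|^2$,
\[
\frac{d}{ds}\,\mathbb E\big|P_s\nabla F(Y)\big|^2
=-2\,\mathbb E\Big[P_s\nabla F(Y)\cdot P_s\big(\nabla^2F\,\nabla F\big)(Y)\Big].
\]
At $s=0$ this is $-2\,\mathbb E[\nabla F^{\top}\nabla^2F\,\nabla F]\le0$ by convexity. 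For $s>0$, however, the two factors live in independent environments (one built from $\mathbf G'$, the other from $\mathbf G''$). The integrand is then a bilinear form $a^{\top}Hb$ with $a\neq b$: a mixed multi-replica expression across two disorders with no evident sign, not something of the form $-\mathbb E\langle(R_{12}-\cdots)^2\rangle$. Your fallback reduces to the same inequality in differential form, so it does not close the gap either.

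The missing idea is the one the paper uses. The free energy $F_N(\beta)$ is convex in the standard Gaussian disorder, because its Hessian is a Gibbs covariance matrix. By Chen's theorem, $\Psi(\lambda)=\lambda^{-1}\log\mathbb E e^{\lambda F}$, with $F=F_N(\beta)-\log\mathbb E Z_N(\beta)$, is therefore convex on $\mathbb R$. We have $\Psi(0)=\mathbb E F$, $\Psi(1)=\log\mathbb E e^{F}=0$ and $\Psi'(0)=\frac12\operatorname{Var}(F)$. Comparing the tangent at $0$ with the secant over $[0,1]$ gives $\frac12\operatorname{Var}(F_N(\beta))\le-\mathbb E F=\log\mathbb E Z_N(\beta)-\mathbb E F_N(\beta)$ directly. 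This is in integrated form, needs no time-by-time overlap comparison, and does not use $\beta\le 1$.

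Convexity has to enter in an essential way. For the concave function $F(x)=-\lambda x^2/2$ of a single standard Gaussian, one has $\operatorname{Var}(F)=\lambda^2/2>\lambda-\log(1+\lambda)=2(\log\mathbb E e^{F}-\mathbb E F)$ for every $\lambda>0$.
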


The proof of Proposition~\ref{prop:variance-overlap} is postponed to
Section~\ref{sec:psi-bound}. Together with Chatterjee's formula \eqref{eq:Chaos}, this shows that the size of the free-energy fluctuations is intimately related to the size of the overlap. 

\subsection{Heuristic critical picture}

We now explain how the expected critical scaling of the overlap leads to the logarithmic fluctuation scale. The following discussion is meant to explain the expected scaling. It is
motivated by Talagrand's rigorous estimates and computations in
\cite[Chapter~11.7]{Tal2}, together with the random-matrix picture developed
in \cite{BSXY}. Talagrand's results show that, outside the critical window,
\[
1-\beta^2 \gg N^{-1/3},
\]
the overlap is asymptotically given by
\[
\mathbb E\langle R_{\sigma \rho}^2\rangle_\beta
\sim
\frac{1}{N(1-\beta^2)}.
\]
At the edge of the critical window, this predicts the scale
\[
\mathbb E\langle R_{\sigma \rho}^2\rangle_\beta
\quad\text{of order}\quad
N^{-2/3}.
\]
Talagrand also proves estimates at the edge of the critical window which are consistent
with this scale. What is not available, is a precise description of the whole final window
up to the endpoint \(\beta=1\). If one nevertheless plugs the finite-size scaling prediction
\[
\mathbb E\langle R_{\sigma \rho}^2\rangle_\beta
\approx
\frac{1}{N(1-\beta^2)}
\qquad
\text{up to } 1-\beta^2 \simeq N^{-1/3},
\]
into \eqref{eq:variance-overlap}, one obtains
\[
N\int_0^{\sqrt{1-N^{-1/3}}}
s\,\frac{1}{N(1-s^2)}\,ds
=
\frac12\log\Big(\frac{1}{N^{-1/3}}\Big)
=
\frac16\log N.
\]
The remaining critical window is expected to contribute only order one, since
throughout this window one expects
\[
\mathbb E\langle R_{\sigma\rho}^2\rangle_\beta = O(N^{-2/3}).
\]
Indeed, the width of the window is of order \(N^{-1/3}\), and therefore
\[
N\int_{\sqrt{1-N^{-1/3}}}^1
s\,\mathbb E\langle R_{\sigma\rho}^2\rangle_s\,ds
=
O(1).
\]
Thus the logarithmic divergence should come from the whole high-temperature region
up to the edge of the critical window. Let us now add the random-matrix heuristic which supplies the missing endpoint picture. One has
\[
\langle R_{\sigma \rho}^2\rangle
=
\frac1{N^2}\operatorname{Tr}(M_\beta^2),
\qquad
M_\beta
=
\bigl(\langle \sigma_i\sigma_j\rangle_\beta\bigr)_{1\leq i,j\leq N}.
\]
Moreover, for fixed \(\beta<1\),
\[
M_\beta \approx (1+\beta^2-\beta G)^{-1}
\]
in a suitable Hilbert--Schmidt sense; see \cite{BSXY} for a precise statement.
If one formally extrapolates this approximation to the critical point
\(\beta=1\), then
\[
\langle R_{\sigma \rho}^2\rangle_1
\approx
\frac1{N^2}\operatorname{Tr}(2-G)^{-2}.
\]
The latter quantity is governed by the eigenvalues of \(G\) near the spectral
edge \(2\), whose fluctuations occur on the scale \(N^{-2/3}\); see, for
instance, \cite{AGZ}. This gives a random-matrix explanation for the critical
overlap scale conjectured by Talagrand and completes the heuristic picture. Let us also mention that, in the spherical SK model, the connection with random matrix theory is much more direct. Baik
and Lee \cite{BaikLee} characterized the free-energy fluctuations across the
phase transition, while Landon \cite{Landon} later analyzed the critical
regime. For Ising spins, the preceding discussion suggests that the critical
fluctuations are also governed, at least heuristically, by random-matrix edge
effects.

We finally mention a possible extension. In the SK model with non-zero external
field, one may ask whether an analogous loop mechanism appears as the
replica-symmetric stability threshold is approached. The high-temperature
expansion of spin covariances in \cite{BSV} already shows that loop
contributions play a central role away from criticality.

\section{Proof of Theorem \ref{thm:main}}

\subsection{A general convex Gaussian inequality}

The goal of this section is to state a general Gaussian inequality for the
variance. We first recall the following theorem of Chen \footnote{A related convexity mechanism already appears in the work of Auffinger and
Chen \cite{AC}, where the uniqueness of the Parisi measure is obtained through
the strict convexity of the Parisi functional, using a stochastic-control
representation.}.

\begin{theorem}[Theorem~1 in \cite{Ch1}]\label{th:chen}
Let \(G\) be a standard Gaussian vector in \(\mathbb{R}^m\), and let
\(F:\mathbb{R}^m\to\mathbb{R}\) be convex. Assume that
\[
\mathbb{E}e^{\lambda F(G)}<\infty
\qquad\text{for every }\lambda>0.
\]
Define
\[
\Psi(\lambda):=
\begin{cases}
\displaystyle
\frac{1}{\lambda}\log \mathbb{E}e^{\lambda F(G)},
& \lambda\neq 0,\\[1ex]
\mathbb{E}F(G),
& \lambda=0.
\end{cases}
\]
Then \(\Psi\) is convex on \(\mathbb{R}\).
\end{theorem}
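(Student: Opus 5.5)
The plan is to work with the cumulant generating function \(\varphi(\lambda):=\log\mathbb E e^{\lambda F(G)}\), so that \(\Psi(\lambda)=\varphi(\lambda)/\lambda\). First I would check that \(\Psi\) is \(C^1\) on \(\mathbb R\), including at \(0\). The integrability assumption makes \(\varphi\) smooth, and for \(\lambda<0\) one uses \(F(x)\geq a+\langle b,x\rangle\) from convexity. Expanding \(\varphi(\lambda)=\lambda\,\mathbb EF+\tfrac{\lambda^2}{2}\operatorname{Var}F+O(\lambda^3)\) gives \(\Psi(0)=\mathbb EF\) and \(\Psi'(0)=\tfrac12\operatorname{Var}F\). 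Then convexity on \(\mathbb R\) reduces to convexity on \((0,\infty)\) and on \((-\infty,0)\) separately, together with continuity of \(\Psi'\) at \(0\).

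\textbf{Positive half-line.} For \(\lambda>0\) I would use the Boué--Dupuis variational formula. With \(B\) a standard Brownian motion in \(\mathbb R^m\) and adapted drifts \(u\),
\[
\log\mathbb E e^{\lambda F(B_1)}=\sup_u \mathbb E\Big[\lambda F\Big(B_1+\int_0^1u_s\,ds\Big)-\frac12\int_0^1|u_s|^2ds\Big].
\]
Substituting \(u=\lambda v\) and dividing by \(\lambda>0\) gives
\[
\Psi(\lambda)=\sup_v\,\mathbb E\Big[F\Big(B_1+\lambda\int_0^1v_s\,ds\Big)-\frac{\lambda}{2}\int_0^1|v_s|^2ds\Big].
\]
For each fixed \(v\), the map \(\lambda\mapsto F(B_1+\lambda\int v)\) is convex, because \(F\) is convex and its argument is affine in \(\lambda\). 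The penalty term is linear in \(\lambda\). Hence the bracket is convex in \(\lambda\), and so is its expectation. A supremum of convex functions is convex, so \(\Psi\) is convex on \((0,\infty)\).

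\textbf{Negative half-line.} For \(\lambda<0\), dividing by \(\lambda\) turns the supremum into an infimum, so this argument breaks down. This is the step I expect to be the main obstacle. Here I would argue through the derivative. A direct computation gives
\[
\Psi'(\lambda)=\frac{\lambda\varphi'(\lambda)-\varphi(\lambda)}{\lambda^2}=\frac{H(\mu_\lambda\,|\,\gamma)}{\lambda^2},
\]
where \(\gamma\) is the standard Gaussian measure and \(\mu_\lambda\propto e^{\lambda F}\gamma\). Writing \(H(\mu_\lambda|\gamma)=\int_0^\lambda s\varphi''(s)\,ds\), monotonicity of \(\Psi'\) is equivalent to
\[
2H(\mu_\lambda|\gamma)\leq\lambda^2\operatorname{Var}_{\mu_\lambda}(F).
\]
A sufficient condition is that \(|s|\mapsto\varphi''(s)\) be monotone between \(0\) and \(\lambda\).

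For \(\lambda<0\) the measure \(\mu_\lambda\) is strongly log-concave, since its density relative to \(\gamma\) is \(e^{\lambda F}\) with \(\lambda F\) concave. I would try to prove the displayed entropy--variance inequality using this structure. One route is a Brascamp--Lieb / Föllmer-drift argument. Let \(u^\lambda_t=\nabla\log P_{1-t}e^{\lambda F}(X_t)\), so that \(H=\frac12\mathbb E\int|u^\lambda_t|^2dt\). Convexity of \(F\) makes \(\nabla\log P_{1-t}e^{\lambda F}\) a martingale-type quantity whose squared norm is controlled at time \(1\) by \(\lambda^2|\nabla F|^2\)-type terms that Brascamp--Lieb relates to \(\operatorname{Var}_{\mu_\lambda}F\). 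An alternative route is to show directly that \(\varphi'''\geq0\) (a nonnegative third cumulant of \(F\) under tilts of \(\gamma\)) using Gaussian integration by parts and the convexity of \(F\).

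Finally, I would glue the two half-lines. Since \(H(\mu_\lambda|\gamma)/\lambda^2\to\frac12\operatorname{Var}F\) as \(\lambda\to0^\pm\), the derivative \(\Psi'\) is continuous at \(0\) and nondecreasing on each side. Hence \(\Psi'\) is nondecreasing on all of \(\mathbb R\), and \(\Psi\) is convex.
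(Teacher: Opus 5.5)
The paper does not prove this statement; it quotes it from Chen. Your argument therefore has to stand on its own.

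\textbf{What works.} Your treatment of $\lambda>0$ is sound: Bou\'e--Dupuis, the substitution $u=\lambda v$, and a supremum of functions convex in $\lambda$. The only caveat is the routine extension of the formula to unbounded $F$. Your gluing at $0$ via differentiability of $\Psi$ is also fine.

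\textbf{The gap: $\lambda<0$.} You leave this half-line open, and your reduction there has the wrong sign. Since $\Psi'=K/\lambda^2$ with $K=\lambda\varphi'-\varphi=H(\mu_\lambda|\gamma)$, one has
\[
\Psi''(\lambda)=\frac{\lambda^2\varphi''(\lambda)-2K(\lambda)}{\lambda^3}.
\]
So for $\lambda<0$, convexity is equivalent to $2H(\mu_\lambda|\gamma)\geq\lambda^2\operatorname{Var}_{\mu_\lambda}(F)$, not $\leq$.

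The inequality you set out to prove is actually false. One has $2K-\lambda^2\varphi''=-\frac13\kappa_3\lambda^3+O(\lambda^4)$, where $\kappa_3$ is the third cumulant of $F(G)$. Concretely, for $F(x)=|x_1|$ and $\lambda=-\frac12$, one computes $2H\approx 0.0745>0.0671\approx\lambda^2\operatorname{Var}_{\mu_\lambda}(F)$.

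The tools you list do not give the correct direction either:
\begin{itemize}
\item The F\"ollmer-drift martingale argument yields $2H\leq\lambda^2\mathbb{E}_{\mu_\lambda}|\nabla F|^2$.
\item Brascamp--Lieb yields $\operatorname{Var}_{\mu_\lambda}(F)\leq\mathbb{E}_{\mu_\lambda}|\nabla F|^2$.
\end{itemize}
These are two upper bounds by the same quantity, so they give no comparison between $H$ and $\operatorname{Var}$.

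Finally, the sufficient condition you would really need, $\varphi''$ nondecreasing on $[\lambda,0]$ (i.e.\ $\varphi'''\geq 0$ there), fails in general. Take $F(x)=x_1+\delta(x_1+1)_+$ and fixed $\lambda<0$. As $\delta\to\infty$, the first marginal of $\mu_\lambda$ tends to $N(\lambda,1)$ conditioned on $\{x_1\leq -1\}$. Hence $\varphi''(-1)\to 1-2/\pi$, while $\varphi''(-10)$ tends to a number above $0.99$.

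\textbf{The missing idea.} Bou\'e--Dupuis also handles $\lambda<0$ if you do \emph{not} rescale the drift. Dividing the formula for $\lambda F$ by $\lambda<0$ gives
\[
\Psi(\lambda)=\inf_u\mathbb{E}\Big[F\Big(B_1+\int_0^1u_s\,ds\Big)+\frac{1}{2|\lambda|}\int_0^1|u_s|^2\,ds\Big].
\]
Three facts make this work:
\begin{itemize}
\item $(\lambda,u)\mapsto |u|^2/(2|\lambda|)$ is jointly convex on $(-\infty,0)\times\mathbb{R}^m$, being the perspective of $|u|^2/2$.
\item $u\mapsto\mathbb{E}F(B_1+\int_0^1u_s\,ds)$ is convex because $F$ is.
\item The admissible drifts form a vector space.
\end{itemize}
A partial infimum of a jointly convex function is convex, so $\Psi$ is convex on $(-\infty,0)$.

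It is precisely your substitution $u=\lambda v$ that turned this into an infimum of convex functions. Conversely, the unscaled form is useless for $\lambda>0$, where $\lambda\mapsto -1/(2\lambda)$ is concave. So each half-line needs its own parametrization. With this in place, your gluing at $0$ finishes the proof.
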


\begin{prop}\label{prop:estvar}
Under the assumptions of Theorem~\ref{th:chen}, suppose in addition that
\[
\mathbb{E}e^{F(G)}=1.
\]
Then
\[
\operatorname{Var}\bigl(F(G)\bigr)
\leq
\log \mathbb{E}e^{2F(G)}.
\]
\end{prop}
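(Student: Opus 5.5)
The plan is to apply Theorem~\ref{th:chen} and compare the values of \(\Psi\) at three points: \(\lambda=0\), \(\lambda=1\) and \(\lambda=2\), together with the derivative of \(\Psi\) at \(0\). By the normalization \(\mathbb{E}e^{F(G)}=1\), we have \(\Psi(1)=\log\mathbb{E}e^{F(G)}=0\). Also \(\Psi(2)=\frac12\log\mathbb{E}e^{2F(G)}\) and \(\Psi(0)=\mathbb{E}F(G)\).

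First compute \(\Psi'(0)\). Write \(\Lambda(\lambda)=\log\mathbb{E}e^{\lambda F}\), the cumulant generating function. Then \(\Lambda(\lambda)=\lambda\mathbb{E}F+\frac{\lambda^2}{2}\operatorname{Var}(F)+o(\lambda^2)\), so \(\Psi(\lambda)=\Lambda(\lambda)/\lambda=\mathbb{E}F+\frac{\lambda}{2}\operatorname{Var}(F)+o(\lambda)\). Hence \(\Psi\) is differentiable at \(0\) with
\[
\Psi'(0)=\tfrac12\operatorname{Var}(F(G)).
\]
Justifying this expansion needs enough integrability, i.e.\ finiteness of exponential moments for \(\lambda\) in a neighborhood of \(0\). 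This follows from the hypothesis for \(\lambda>0\) together with Gaussian concentration for convex, hence locally Lipschitz, \(F\). In the application \(F\) is Lipschitz anyway, and I would note this rather than belabor it.

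Next use convexity. The tangent line at \(0\) lies below \(\Psi\), so
\[
\Psi(\lambda)\ge\Psi(0)+\tfrac{\lambda}{2}\operatorname{Var}(F)\quad\text{for all }\lambda.
\]
At \(\lambda=1\) this gives \(0\ge\mathbb{E}F+\frac12\operatorname{Var}(F)\). At \(\lambda=2\) it gives \(\frac12\log\mathbb{E}e^{2F}\ge\mathbb{E}F+\operatorname{Var}(F)\). Substituting \(\mathbb{E}F\le-\frac12\operatorname{Var}(F)\) is in the wrong direction, so instead combine convexity differently.

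Since \(\Psi\) is convex, slopes of secants are nondecreasing. Thus \(\Psi'(0)\le\Psi(2)-\Psi(1)\), because the chord slope over \([1,2]\) is at least the derivative at \(0<1\). This gives
\[
\tfrac12\operatorname{Var}(F)\le\tfrac12\log\mathbb{E}e^{2F}-0,
\]
which is exactly the claim.

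The only delicate step is the differentiability of \(\Psi\) at \(0\) with \(\Psi'(0)=\frac12\operatorname{Var}F\). Alternatively, one can avoid it by using only the right derivative, \(\Psi'_+(0)=\lim_{\lambda\downarrow0}(\Psi(\lambda)-\Psi(0))/\lambda\). This one-sided limit requires finite moments only for \(\lambda>0\), which is exactly the stated hypothesis, together with \(\mathbb{E}F^2<\infty\), which follows from it and convexity.
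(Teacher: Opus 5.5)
Your argument is correct and is the paper's proof: you get $\Psi'(0)=\frac12\operatorname{Var}(F)$ from the cumulant expansion, note $\Psi(1)=0$, and use monotonicity of secant slopes of the convex $\Psi$ to conclude $\Psi'(0)\le\Psi(2)-\Psi(1)=\frac12\log\mathbb{E}e^{2F}$. One correction to your side remark: local Lipschitzness does not give Gaussian concentration. The finiteness of $\mathbb{E}e^{\lambda F}$ for $\lambda<0$ instead follows directly from convexity, since $F(x)\ge F(0)+\langle v,x\rangle$ for any subgradient $v$ at $0$; so the cumulant generating function is finite on all of $\mathbb{R}$ and the expansion at $0$ is justified, a point the paper does not address.
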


\begin{proof}
For simplicity, write \(F=F(G)\). The Taylor expansion at the origin gives
\[
\log \mathbb{E}e^{\lambda F}
=
\lambda \mathbb{E}F
+
\frac{\lambda^2}{2}\operatorname{Var}(F)
+
o(\lambda^2).
\]
Therefore,
\[
\Psi(\lambda)
=
\mathbb{E}F
+
\frac{\lambda}{2}\operatorname{Var}(F)
+
o(\lambda),
\]
and hence
\[
\Psi'(0)=\frac12\operatorname{Var}(F).
\]
Moreover, the normalization \(\mathbb{E}e^F=1\) gives
\[
\Psi(1)=\log \mathbb{E}e^F=0.
\]
Since \(\Psi\) is convex, its derivative at \(0\) is bounded above by the
slope of the secant line between \(1\) and \(2\). Thus
\[
\Psi'(0)
\leq
\frac{\Psi(2)-\Psi(1)}{2-1}
=
\Psi(2).
\]
Consequently,
\[
\frac12\operatorname{Var}(F)
\leq
\Psi(2)
=
\frac12\log \mathbb{E}e^{2F},
\]
which proves the claim.
\end{proof}

\subsection{Application to the SK model}\label{SubSect:AppSK}

\begin{proof} [Proof of Theorem~\ref{thm:main}] 

We first verify that the assumptions of Theorem~\ref{th:chen} are satisfied.
In order to apply the Gaussian convexity theorem, we regard the free energy as
a function of a standard Gaussian vector. Thus we write
\[
g_{ij}=\frac1{\sqrt N}g'_{ij},
\qquad 1\leq i<j\leq N,
\]
where the variables \(g'_{ij}\) are independent standard Gaussian random
variables. Equivalently,
\[
H_N(\sigma)
=
\frac1{\sqrt N}
\sum_{1\leq i<j\leq N} g'_{ij}\sigma_i\sigma_j .
\]
We now check that \(F_N(\beta)\) is convex as a function of the variables
\((g'_{ij})_{i<j}\). For \(1\leq i<j\leq N\),
\[
\partial_{g'_{ij}}F_N(\beta)
=
\frac{\beta}{\sqrt N}
\bigl\langle \sigma_i\sigma_j\bigr\rangle_\beta .
\]
Therefore,
\[
\partial_{g'_{ij}}\partial_{g'_{kl}}F_N(\beta)
=
\frac{\beta^2}{N}
\left(
\bigl\langle \sigma_i\sigma_j\sigma_k\sigma_l\bigr\rangle_\beta
-
\bigl\langle \sigma_i\sigma_j\bigr\rangle_\beta
\bigl\langle \sigma_k\sigma_l\bigr\rangle_\beta
\right).
\]
Equivalently, for any real coefficients \((a_{ij})_{i<j}\),
\[
\sum_{i<j}\sum_{k<l}
a_{ij}a_{kl}
\partial_{g'_{ij}}\partial_{g'_{kl}}F_N(\beta)
=
\frac{\beta^2}{N}
\left\langle
\left(
\sum_{i<j}a_{ij}
\Bigl(
\sigma_i\sigma_j
-
\bigl\langle \sigma_i\sigma_j\bigr\rangle_\beta
\Bigr)
\right)^2
\right\rangle_\beta
\geq 0 .
\]
Hence the Hessian of \(F_N(\beta)\), viewed as a function of the standard Gaussian disorder \(g'\), is positive semidefinite. Thus \(F_N(\beta)\) is convex. It remains to check exponential integrability. Since
\[
F_N(\beta)
\leq
\beta \max_{\sigma}H_N(\sigma),
\]
for every \(\lambda>0\),
\[
e^{\lambda F_N(\beta)}
\leq
e^{\lambda \beta \max_{\sigma}H_N(\sigma)}
\leq
\sum_{\sigma}e^{\lambda \beta H_N(\sigma)}.
\]
Therefore,
\[
\mathbb{E}e^{\lambda F_N(\beta)}
\leq
\sum_{\sigma}
\mathbb{E}e^{\lambda \beta H_N(\sigma)}
<\infty,
\]
because, for every fixed \(\sigma\), \(H_N(\sigma)\) is Gaussian. Define
\[
W_N(\beta)
:=
\frac{Z_N(\beta)}{\mathbb E Z_N(\beta)}.
\]
Then
\[
\log W_N(\beta)
=
F_N(\beta)-\log \mathbb E Z_N(\beta).
\]
Since \(\log \mathbb E Z_N(\beta)\) is deterministic and \(F_N(\beta)\) is convex as a
function of the Gaussian disorder, \(\log W_N(\beta)\) is also convex. Moreover, for every \(\lambda>0\),
\[
\mathbb E e^{\lambda\log W_N(\beta)}=\mathbb E W_N(\beta)^\lambda<\infty.
\]
Hence all the assumptions of Theorem~\ref{th:chen} are satisfied, and
\[
\Psi_N(\lambda)
:=
\begin{cases}
\displaystyle
\frac1\lambda\log\mathbb{E}e^{\lambda \log W_N(\beta)},
& \lambda\neq0,\\[1ex]
\mathbb{E} \log W_N(\beta),
& \lambda=0,
\end{cases}
\]
is convex on \(\mathbb{R}\). Since $\bE W_N(\beta)=1$, the proposition \ref{prop:estvar} yields
\[
\Var(\log Z_N(\beta))
=
\Var(\log W_N(\beta))
\leq
\log\frac{\bE Z_N(\beta)^2}{(\bE Z_N(\beta))^2}.
\]
We now compute the right-hand side. For two independent spin configurations
$\sigma,\tau$, Gaussian integration gives
\[
\frac{\bE Z_N(\beta)^2}{(\bE Z_N(\beta))^2}
=
\bE_{\sigma,\tau}
\exp\left(
\frac{\beta^2}{N}
\sum_{i<j}\sigma_i\sigma_j\tau_i\tau_j
\right).
\]
Writing $s_i=\sigma_i\tau_i$, the variables $s_i$ are independent Rademacher variables and
\[
\sum_{i<j}s_is_j
=
\frac12\left[\left(\sum_{i=1}^Ns_i\right)^2-N\right].
\]
Thus
\[
\frac{\bE Z_N(\beta)^2}{(\bE Z_N(\beta))^2}
=
e^{-\beta^2/2}
\bE_s\exp\left(
\frac{\beta^2}{2N}
\left(\sum_{i=1}^Ns_i\right)^2
\right).
\]
The right-hand side is, up to the deterministic factor $e^{-\beta^2/2}$,
the Curie--Weiss partition function at inverse temperature $\beta^2$.

\begin{lemma}\label{lem:critical-CW}
Let \(s_1,\ldots,s_N\) be independent Rademacher random variables. Then
\[
\bE_s\exp\left(
\frac{1}{2N}\left(\sum_{i=1}^N s_i\right)^2
\right)
\sim
\frac{N^{1/4}}{\sqrt{2\pi}}
\int_{\mathbb R}e^{-y^4/12}\,dy.
\]
In particular,
\[
\bE_s\exp\left(
\frac{1}{2N}\left(\sum_{i=1}^N s_i\right)^2
\right)
\leq C N^{1/4}.
\]
\end{lemma}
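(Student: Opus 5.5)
The natural route is a Hubbard--Stratonovich linearization followed by a Laplace-type analysis of the resulting one-dimensional integral. Write \(S=\sum_{i=1}^N s_i\). For \(Z\sim\mathcal N(0,1)\) independent of \(s\), we have \(e^{S^2/(2N)}=\mathbb E_Z e^{ZS/\sqrt N}\). Taking \(\mathbb E_s\) first gives
\[
\mathbb E_s e^{S^2/(2N)}=\frac1{\sqrt{2\pi}}\int_{\mathbb R}e^{-z^2/2}\cosh\!\big(z/\sqrt N\big)^N\,dz
=\frac{\sqrt N}{\sqrt{2\pi}}\int_{\mathbb R}e^{-N\phi(x)}\,dx,
\]
where we substituted \(z=\sqrt N x\) and set \(\phi(x)=x^2/2-\log\cosh x\). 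This reduces the lemma to a one-dimensional Laplace integral with a degenerate minimum.

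The key facts about \(\phi\) are as follows. It is even, \(\phi(0)=0\), and \(\phi'(x)=x-\tanh x>0\) for \(x>0\), so \(0\) is the unique minimiser. Its Taylor expansion is \(\phi(x)=x^4/12+O(x^6)\), since \(\log\cosh x=x^2/2-x^4/12+O(x^6)\). Finally, \(\phi(x)\ge x^2/2-|x|\) for all \(x\), because \(\log\cosh x\le |x|\).

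With these in hand, rescale by \(x=N^{-1/4}y\), which turns the integral into
\[
\frac{N^{1/4}}{\sqrt{2\pi}}\int_{\mathbb R}e^{-N\phi(N^{-1/4}y)}\,dy.
\]
Pointwise, \(N\phi(N^{-1/4}y)=y^4/12+O(y^6N^{-1/2})\to y^4/12\). The asymptotic therefore follows by dominated convergence once an integrable dominating function is produced.

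Producing that dominating function is the only real obstacle, and I would split the range into two parts.

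\textbf{Small \(|x|\).} I would first show that \(\phi(x)\ge c\,x^4\) for \(|x|\le 1\), with some \(c>0\). The function \(\phi(x)/x^4\) is continuous and positive on \((0,1]\), and it tends to \(1/12\) as \(x\to0\). Hence it is bounded below on this interval. Alternatively one can use \(\phi''(x)=\tanh^2 x\ge c' x^2\) on \([0,1]\) and integrate twice.

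\textbf{Large \(|x|\).} For \(|x|\ge1\), monotonicity of \(\phi\) gives \(\phi(x)\ge\phi(1)>0\). For \(|x|\ge 4\), the lower bound \(\phi(x)\ge x^2/4\) holds. These two bounds make the contribution of \(|y|\ge N^{1/4}\) to the rescaled integral at most \(\exp(-c''N)\) times a polynomial factor, which is negligible against \(N^{1/4}\).

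On \(|y|\le N^{1/4}\), the integrand is dominated by \(e^{-c y^4}\), which is integrable. Dominated convergence then yields
\[
\mathbb E_s e^{S^2/(2N)}\sim\frac{N^{1/4}}{\sqrt{2\pi}}\int_{\mathbb R}e^{-y^4/12}\,dy.
\]
The bound \(\le CN^{1/4}\) follows directly from the same domination, uniformly in \(N\).
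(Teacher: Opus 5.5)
Your proposal is correct and follows essentially the same route as the paper: Hubbard--Stratonovich linearization, the substitution $z=\sqrt N x$, a quartic lower bound near the degenerate minimum to dominate the rescaled integrand after $x=N^{-1/4}y$, and exponential smallness of the contribution away from $0$ (using the comparison $\log\cosh x\le |x|$). The only differences are cosmetic. You use the opposite sign convention for $\phi$, take explicit cutoffs $\delta=1$ and $M=4$, and use monotonicity of $\phi$ in $|x|$ where the paper uses concavity and compactness.
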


\begin{proof}
By the Hubbard--Stratonovich identity,
\[
e^{u^2/2}
=
\frac{1}{\sqrt{2\pi}}
\int_{\mathbb R}e^{-z^2/2+uz}\,dz,
\]
we have
\[
\begin{aligned}
\bE_s\exp\left(
\frac{1}{2N}\left(\sum_{i=1}^N s_i\right)^2
\right)
&=
\frac{1}{\sqrt{2\pi}}
\int_{\mathbb R}
e^{-z^2/2}
\bE_s\exp\left(
\frac{z}{\sqrt N}\sum_{i=1}^N s_i
\right)\,dz\\
&=
\frac{1}{\sqrt{2\pi}}
\int_{\mathbb R}
e^{-z^2/2}
\cosh^N\left(\frac{z}{\sqrt N}\right)\,dz.
\end{aligned}
\]
After the change of variables \(z=\sqrt N\,x\), this becomes
\[
\sqrt{\frac{N}{2\pi}}
\int_{\mathbb R}e^{N\phi(x)}\,dx,
\qquad
\phi(x):=\log\cosh x-\frac{x^2}{2}.
\]

We first analyze the contribution near the maximum of \(\phi\). Since
\[
\phi(x)
=
-\frac{x^4}{12}+O(x^6)
\qquad \text{as } x\to 0,
\]
we may choose \(\delta>0\) small enough and \(c>0\) such that
\[
\phi(x)\leq -c x^4
\qquad \text{for all } |x|\leq \delta.
\]
After the change of variables \(x=N^{-1/4}y\), we obtain
\[
N^{1/4}\int_{|x|\leq \delta}e^{N\phi(x)}\,dx
=
\int_{\mathbb R}
e^{N\phi(N^{-1/4}y)}
\mathbf 1_{\{|y|\leq \delta N^{1/4}\}}\,dy.
\]
For every fixed \(y\),
\[
N\phi(N^{-1/4}y)
=
-\frac{y^4}{12}+O(N^{-1/2}y^6)
\longrightarrow
-\frac{y^4}{12},
\]
and, for \(|y|\leq \delta N^{1/4}\),
\[
e^{N\phi(N^{-1/4}y)}
\leq e^{-cy^4}.
\]
Since \(e^{-cy^4}\in L^1(\mathbb R)\), dominated convergence gives
\[
N^{1/4}\int_{|x|\leq \delta}e^{N\phi(x)}\,dx
\longrightarrow
\int_{\mathbb R}e^{-y^4/12}\,dy.
\]

It remains to show that the contribution away from \(0\) is negligible.
We record the elementary properties of \(\phi\). Since
\[
\phi''(x)= -\tanh^2(x)\leq 0,
\]
the function \(\phi\) is concave. Moreover,
\[
\phi'(x)=\tanh x-x,
\]
so \(0\) is the unique critical point, hence the unique global maximizer,
and \(\phi(0)=0\). Therefore, for every fixed \(M>\delta\),
\[
\sup_{\delta\leq |x|\leq M}\phi(x)<0.
\]
Thus
\[
\int_{\delta\leq |x|\leq M}e^{N\phi(x)}\,dx
\]
is exponentially small in \(N\). Finally, using \(\log\cosh x\leq |x|\), we get
\[
\phi(x)\leq |x|-\frac{x^2}{2}.
\]
Choosing \(M\) large enough, this implies
\[
\phi(x)\leq -\frac{x^2}{4}
\qquad \text{for } |x|\geq M.
\]
Hence
\[
\int_{|x|\geq M}e^{N\phi(x)}\,dx
\leq
\int_{|x|\geq M}e^{-Nx^2/4}\,dx,
\]
which is exponentially small in \(N\). Therefore
\[
N^{1/4} \int_{\mathbb R}e^{N\phi(x)}\,dx
\sim
N^{1/4}  \int_{|x|\leq\delta}e^{N\phi(x)}\,dx
\sim
   \int_{\mathbb R}e^{-y^4/12}\,dy.
\]
Multiplying by $N^{\frac{1}{4}} / \sqrt{2\pi}$, we obtain
\[
\bE_s\exp\left(
\frac{1}{2N}\left(\sum_{i=1}^N s_i\right)^2
\right)
\sim
\frac{N^{1/4}}{\sqrt{2\pi}}
\int_{\mathbb R}e^{-y^4/12}\,dy.
\]
The upper bound by \(C N^{1/4}\) follows immediately from the asymptotic.
\end{proof}

At the critical value \(\beta=1\), Lemma~\ref{lem:critical-CW} gives
\[
\bE W_N(1)^2
\sim
\frac{N^{1/4} e^{-1/2}}{\sqrt{2\pi}} \int_{\mathbb R}e^{-y^4/12}\,dy.
\]
In particular,
\[
\log \bE W_N(1)^2
=
\frac14\log N+O(1).
\]
Since
\[
\Var(\log Z_N(1))
=
\Var(\log W_N(1))
\leq
\log \bE W_N(1)^2,
\]
we obtain
\[
\Var(\log Z_N(1))
\leq
\frac14\log N+O(1).
\]

\end{proof}

\section{Proof of Theorem \ref{thm:critical-lower-bound}}

\begin{proof}[Proof of Theorem~\ref{thm:critical-lower-bound}]
We use Chatterjee's variance formula \cite[Theorem~3.11]{Cha}. If
\(g'=(g_1',\ldots,g_n')\) is a vector of independent standard Gaussian random
variables and \(f\) is a \(C^\infty\) function of \(g'\) with bounded derivatives
of all orders, then
\[
\operatorname{Var}(f(g'))
=
\sum_{k=1}^{\infty}
\frac1{k!}
\sum_{1\leq i_1,\ldots,i_k\leq n}
\left(
\mathbb E
\frac{\partial^k f}
{\partial g_{i_1}'\cdots \partial g_{i_k}'}(g')
\right)^2 .
\]

We apply this formula to the standard Gaussian variables
\[
g'=(g'_{ij})_{1\le i<j\le N},
\qquad
g_{ij}=\frac{1}{\sqrt N}g'_{ij}.
\]
Thus
\[
H_N(\sigma)
=
\frac1{\sqrt N}
\sum_{1\le i<j\le N}
g'_{ij}\sigma_i\sigma_j .
\]

Let \(\mathcal C_{N,k}\) be the set of unoriented simple cycles of length \(k\)
in the complete graph on \(N\) vertices. For \(\gamma\in\mathcal C_{N,k}\), let
\(E(\gamma)\) be its set of edges and define
\[
\partial_\gamma^k
=
\prod_{e\in E(\gamma)}\partial_{g'_e}.
\]
Since Chatterjee's formula sums over ordered \(k\)-tuples of Gaussian
coordinates, each fixed \(\gamma\in\mathcal C_{N,k}\) gives \(k!\) identical
terms, corresponding to the \(k!\) orderings of its edges. Therefore, for any
integer \(M_N\),
\[
\operatorname{Var}\bigl(\log Z_N(1)\bigr)
\geq
\sum_{k=3}^{M_N}
\sum_{\gamma\in\mathcal C_{N,k}}
\left(
\mathbb E \partial_\gamma^k \log Z_N(1)
\right)^2 .
\]
We set $\langle\cdot\rangle_1=\langle\cdot\rangle$. We now identify the contribution of a fixed simple cycle. Write
\(\gamma=(i_1,\ldots,i_k)\), with \(i_{k+1}=i_1\), and put
\[
X_\ell=\sigma_{i_\ell}\sigma_{i_{\ell+1}},
\qquad 1\leq \ell\leq k.
\]
Repeated differentiation of the free energy gives
\[
\partial_\gamma^k \log Z_N(1)
=
N^{-k/2}
\kappa_{\langle\cdot\rangle}(X_1,\ldots,X_k),
\]
where \(\kappa_{\langle\cdot\rangle}\) denotes the Gibbs cumulant. Thus
\[
\kappa_{\langle\cdot\rangle}(X_1,\ldots,X_k)
=
\sum_{\pi\in\mathcal P_k}
(|\pi|-1)!(-1)^{|\pi|-1}
\prod_{B\in\pi}
\left\langle \prod_{\ell\in B}X_\ell\right\rangle ,
\]
where \(\mathcal P_k\) is the set of partitions of \(\{1,\ldots,k\}\), and
\(|\pi|\) denotes the number of blocks of \(\pi\). The one-block partition gives
\[
\left\langle \prod_{\ell=1}^k X_\ell\right\rangle
=
\left\langle
\prod_{\ell=1}^k \sigma_{i_\ell}\sigma_{i_{\ell+1}}
\right\rangle
=
\langle 1\rangle
=
1.
\]
All other partitions have at least two blocks. Hence every block
\(B\in\pi\) is a non-empty strict subset of \(\{1,\ldots,k\}\). Consider
\[
\prod_{\ell\in B}X_\ell
=
\prod_{\ell\in B}\sigma_{i_\ell}\sigma_{i_{\ell+1}}.
\]
A spin \(\sigma_{i_j}\) cancels from this product only if the two edges of the
cycle that contain \(i_j\) both belong to \(B\). Since \(B\) is a non-empty
proper subset of \(\{1,\ldots,k\}\), there is at least one place along the cycle
where an edge in \(B\) is followed by an edge outside \(B\). At this vertex, the
associated spin appears once in the product. Thus the product is a non-trivial
spin monomial, and in particular it is not identically equal to \(1\).
We shall use the following consequence of Talagrand's critical overlap
estimate \cite[Proposition 11.7.8]{Tal2}: at the critical point \(\beta=1\), there exists \(C<\infty\) such that
\[
\mathbb E\langle R_{12}^2\rangle\leq C N^{-1/2}.
\]

\begin{lemma}\label{lem:local-correlations}
Let \(A\subset\{1,\ldots,N\}\) be non-empty, with \(|A|\leq \sqrt N\). At zero
external field and at \(\beta=1\),
\[
\mathbb E\langle \sigma_A\rangle^2\leq C N^{-1/2},
\]
provided \(|A|\geq 2\). If \(|A|\) is odd, then in fact
\(\langle\sigma_A\rangle=0\).
\end{lemma}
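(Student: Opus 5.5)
The odd case follows from spin-flip symmetry. At zero external field the Hamiltonian satisfies \(H_N(-\sigma)=H_N(\sigma)\), so the Gibbs measure is invariant under the global flip \(\sigma\mapsto-\sigma\). Under this flip \(\sigma_A\mapsto(-1)^{|A|}\sigma_A\). Hence \(\langle\sigma_A\rangle=0\) whenever \(|A|\) is odd, for every realization of the disorder.

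For \(|A|\ge 2\), the plan is to compare \(\mathbb E\langle\sigma_A\rangle^2\) with the overlap moment \(\mathbb E\langle R_{12}^2\rangle\), using the symmetry of the disorder distribution under permutations of the sites. Write \(\langle\sigma_A\rangle^2=\langle\sigma_A\rho_A\rangle\) for two replicas \(\sigma,\rho\), and set \(s_i=\sigma_i\rho_i\). Since the law of \(G\) is invariant under relabelling of \(\{1,\ldots,N\}\), the quantity \(\mathbb E\langle \prod_{i\in A}s_i\rangle\) depends only on \(m=|A|\). Call it \(a_m\). Averaging over all \(A\) with \(|A|=m\) gives
\[
a_m=\binom Nm^{-1}\mathbb E\Bigl\langle \sum_{|A|=m}\prod_{i\in A}s_i\Bigr\rangle=\binom Nm^{-1}\mathbb E\langle e_m(s)\rangle,
\]
where \(e_m\) is the elementary symmetric polynomial. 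Since \(s_i^2=1\), \(e_m(s)\) is a polynomial in \(S=\sum_i s_i=NR_{12}\). For \(m=2\),
\[
e_2=\frac{S^2-N}{2},
\qquad\text{so}\qquad
a_2=\frac{N^2\mathbb E\langle R_{12}^2\rangle-N}{N(N-1)}\le \frac{N}{N-1}\,\mathbb E\langle R_{12}^2\rangle\le CN^{-1/2},
\]
by Talagrand's estimate.

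For general even \(m\), one could try to bound \(e_m(s)\) in terms of powers of \(S\) and higher overlap moments \(\mathbb E\langle R_{12}^{2p}\rangle\). That requires estimates not available here. Instead I would use a reduction to pairs that avoids higher moments. Since \(A\) has even size, pair its elements into \(\{i_1,j_1\},\ldots\). Then \(\sigma_A=\prod_r \sigma_{i_r}\sigma_{j_r}\). However, \(\langle\sigma_A\rangle\) is not bounded by \(\langle\sigma_{i_1}\sigma_{j_1}\rangle\) in general, so this pairing alone does not close the argument.

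A cleaner route is a direct averaging argument with a Cauchy--Schwarz-type positivity. Write \(\sigma_A=\sigma_B\sigma_{A\setminus B}\) with \(|B|=2\) chosen uniformly from \(A\). This still does not bound \(\langle \sigma_A\rangle\) by pair correlations directly. So the main obstacle is really \(m\ge4\). The approach I would try first is the exact identity
\[
e_m(s)=\sum_{k}c_{m,k}S^{k},
\qquad
|e_m(s)|\le \frac{1}{m!}\,\bigl|\,\text{polynomial of degree } m \text{ in } S\,\bigr|.
\]
This yields \(a_m\lesssim \mathbb E\langle (S^2/N)^{m/2}\rangle\, N^{-m/2}\) together with lower-order terms. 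Those terms would in turn be controlled through the concentration of \(R_{12}\) at scale \(N^{-1/4}\), i.e. the bound \(\mathbb E\langle R_{12}^{2p}\rangle\le C_pN^{-p/2}\), which is presumably provable by the same Talagrand argument. With \(|A|\le\sqrt N\), this keeps the combinatorial factors \(\binom Nm^{-1}\) under control and gives \(a_m\le CN^{-m/4}\le CN^{-1/2}\).

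Finally, the condition \(|A|\le\sqrt N\) is needed to absorb the ratios \(N^m/\binom Nm\) appearing in the averaging, and this is where it would be used.
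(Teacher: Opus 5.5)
There is a gap: the odd case and the case \(|A|=2\) are correct, but for even \(m=|A|\ge 4\) you do not give a proof. You propose writing \(e_m(s)\) as a signed polynomial in \(S\); its coefficients are Krawtchouk-type, alternate in sign and grow with \(m\). You then defer the resulting terms to a moment bound \(\mathbb E\langle R_{12}^{2p}\rangle\le C_pN^{-p/2}\). That bound is not implied by the estimate \(\mathbb E\langle R_{12}^2\rangle\le CN^{-1/2}\), and here it would be needed uniformly in \(p\) up to order \(\sqrt N\). It is also far stronger than the lemma requires: your target \(a_m\le CN^{-m/4}\) is much more than \(CN^{-1/2}\). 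Even your leading term never needed it, because \(|R_{12}|\le 1\) and \(m\ge 2\) even already give \(\mathbb E\langle R_{12}^m\rangle\le\mathbb E\langle R_{12}^2\rangle\).

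The missing idea is positivity, which removes the need for any signed expansion. Write
\[
S^m=\sum_{i_1,\dots,i_m}s_{i_1}\cdots s_{i_m}=m!\,e_m(s)+\sum_{\text{tuples with a repeated index}}s_{i_1}\cdots s_{i_m}.
\]
Since \(s_i^2=1\), each product reduces to \(\prod_{i\in A'}s_i\), where \(A'\) is the set of indices of odd multiplicity. The two replicas are independent draws from the same Gibbs measure, so \(\langle\prod_{i\in A'}s_i\rangle=\langle\sigma_{A'}\rangle^2\ge 0\), and every discarded term is non-negative. Dropping all tuples with repetitions therefore gives
\[
N(N-1)\cdots(N-m+1)\,a_m=m!\,\mathbb E\langle e_m(s)\rangle\le N^m\,\mathbb E\langle R_{12}^m\rangle\le N^m\,\mathbb E\langle R_{12}^2\rangle\le CN^{m-1/2}.
\]
Finally, \(N^m/(N(N-1)\cdots(N-m+1))\le e^{m^2/(N-m)}\le C\) for \(m\le\sqrt N\). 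This is exactly the paper's proof.

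Note also that the ratio absorbed using \(|A|\le\sqrt N\) is \(N^m/(m!\binom Nm)\), not \(N^m/\binom Nm\) as you wrote. The latter is of order \(m!\) and is not bounded.
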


\begin{proof}
If \(|A|\) is odd, the claim follows from the global spin-flip symmetry. Assume
therefore that \(r:=|A|\) is even. Let
\[
\tau_i=\sigma_i^1\sigma_i^2,
\qquad
R_{12}=\frac1N\sum_{i=1}^N\tau_i.
\]
Then
\[
\mathbb E\langle \sigma_A\rangle^2
=
\mathbb E\left\langle \prod_{i\in A}\tau_i\right\rangle .
\]
By exchangeability, all subsets of size \(r\) give the same contribution. In the
expansion of \(\mathbb E\langle R_{12}^r\rangle\), the contribution of the
ordered \(r\)-tuples of distinct indices is therefore
\[
\frac{N(N-1)\cdots(N-r+1)}{N^r}
\mathbb E\langle \sigma_A\rangle^2.
\]
Moreover, since \(r\) is even, every term in the expansion of
\(\mathbb E\langle R_{12}^r\rangle\) is non-negative: after cancelling repeated
indices, it is of the form
\(\mathbb E\langle \sigma_{A'}\rangle^2\) for some set \(A'\). Hence
\[
\frac{N(N-1)\cdots(N-r+1)}{N^r}
\mathbb E\langle \sigma_A\rangle^2
\leq
\mathbb E\langle R_{12}^r\rangle .
\]
Since \(|R_{12}|\leq1\) and \(r\geq2\),
\[
\mathbb E\langle R_{12}^r\rangle
\leq
\mathbb E\langle R_{12}^2\rangle
\leq
C N^{-1/2}.
\]
Finally, since \(r\leq \sqrt N\), and for \(N\) large \(r\leq N/2\),
\[
\frac{N^r}{N(N-1)\cdots(N-r+1)}
\leq
\left(\frac{N}{N-r}\right)^r
=
\exp\left(-r\log\left(1-\frac rN\right)\right)
\leq
\exp\left(\frac{r^2}{N-r}\right)
\leq C.
\]
This proves the lemma.
\end{proof}

We return to the cumulant expansion. Using Lemma~\ref{lem:local-correlations}
and the fact that all Gibbs averages are bounded by one in absolute value, we
obtain
\begin{align}\label{eq:maj}
\mathbb E \left|
\sum_{\substack{\pi\in\mathcal P_k\\ |\pi|\ge 2}}
(|\pi|-1)!(-1)^{|\pi|-1}
\prod_{B\in\pi}
\left\langle \prod_{\ell\in B} X_\ell\right\rangle
\right|
&\leq
\sum_{\substack{\pi\in\mathcal P_k\\ |\pi|\ge 2}}
(|\pi|-1)!
\mathbb E
\prod_{B\in\pi}
\left|
\left\langle \prod_{\ell\in B} X_\ell\right\rangle
\right|  \\
&\leq
C N^{-1/4}
\sum_{\substack{\pi\in\mathcal P_k\\ |\pi|\ge 2}}
(|\pi|-1)! .
\end{align}
Indeed, if \(|\pi|\geq2\), then \(\pi\) contains a proper block
\(B\subsetneq\{1,\ldots,k\}\). For this block,
\[
\prod_{\ell\in B}X_\ell=\sigma_A
\]
for some non-empty set \(A\), and by Cauchy--Schwarz and
Lemma~\ref{lem:local-correlations},
\[
\mathbb E\left|
\left\langle \prod_{\ell\in B}X_\ell\right\rangle
\right|
\leq
\left(
\mathbb E
\left\langle \prod_{\ell\in B}X_\ell\right\rangle^2
\right)^{1/2}
\leq
C N^{-1/4}.
\]
All the other factors are bounded by one in absolute value.

It remains to bound the combinatorial factor. For fixed \(m\), the number of
partitions of \(\{1,\ldots,k\}\) with \(m\) blocks is bounded by \(m^k\):
indeed, after labelling the blocks, each of the \(k\) indices has at most \(m\)
possible choices for the block to which it belongs. Therefore,
\[
\sum_{\substack{\pi\in\mathcal P_k\\ |\pi|\ge 2}}
(|\pi|-1)!
\leq
\sum_{m=2}^k m^k(m-1)!
\leq
k^{k+1}k!.
\]
Combining this with \eqref{eq:maj}, we get
\[
\mathbb E \left|
\sum_{\substack{\pi\in\mathcal P_k\\ |\pi|\ge 2}}
(|\pi|-1)!(-1)^{|\pi|-1}
\prod_{B\in\pi}
\left\langle \prod_{\ell\in B} X_\ell\right\rangle
\right|
\leq
C k^{k+1}k! N^{-1/4}.
\]
Hence, for every simple cycle \(\gamma\in\mathcal C_{N,k}\),
\[
\mathbb E\,\partial_\gamma^k \log Z_N(1)
=
N^{-k/2}\left(1+\varepsilon_{N,k,\gamma}\right),
\]
where
\[
|\varepsilon_{N,k,\gamma}|
\leq
C k^{k+1}k! N^{-1/4}.
\]

We now sum over cycles. Since \(M_N=\lfloor \log\log N\rfloor\), we have
\[
\sup_{3\leq k\leq M_N} k^{k+1}k!N^{-1/4}=o(1).
\]
Therefore,
\begin{align*}
\sum_{k=3}^{M_N}
\sum_{\gamma\in\mathcal C_{N,k}}
\left(
\mathbb E\,\partial_\gamma^k \log Z_N(1)
\right)^2
&=
\sum_{k=3}^{M_N}
N^{-k}
\sum_{\gamma\in\mathcal C_{N,k}}
\left(1+\varepsilon_{N,k,\gamma}\right)^2 \\
&=
\sum_{k=3}^{M_N}
N^{-k}|\mathcal C_{N,k}|+o(1).
\end{align*}
Here we used that \(N^{-k}|\mathcal C_{N,k}|\leq 1\) and the error is uniform
for \(k\leq M_N\).

Finally,
\[
|\mathcal C_{N,k}|
=
\frac{N(N-1)\cdots(N-k+1)}{2k}.
\]
Thus, uniformly for \(k\leq M_N\),
\[
N^{-k}|\mathcal C_{N,k}|
=
\frac1{2k}\prod_{j=0}^{k-1}\left(1-\frac jN\right)
=
\frac1{2k}+o\left(\frac1k\right),
\]
because \(M_N^2/N\to0\). Hence
\[
\sum_{k=3}^{M_N}
N^{-k}|\mathcal C_{N,k}|
=
\frac12\sum_{k=3}^{M_N}\frac1k+o(1).
\]
Combining the previous estimates gives
\[
\operatorname{Var}\big(\log Z_N(1)\big)
\geq
\frac12\sum_{k=3}^{M_N}\frac1k+o(1).
\]
Since
\[
\sum_{k=3}^{M_N}\frac1k
=
\sum_{k=1}^{M_N}\frac1k-\frac32,
\]
this is equivalent, up to an additive constant, to the announced logarithmic
divergence. In particular, for \(M_N=\lfloor\log\log N\rfloor\),
\[
\operatorname{Var}\big(\log Z_N(1)\big)
\geq
\frac12\log\log\log N-C.
\]
\end{proof}

\section{A variance bound from the annealed--quenched gap}
\label{sec:psi-bound}

\begin{proof}[Proof of Proposition~\ref{prop:variance-overlap}]
We argue as in the proof of Proposition~\ref{prop:estvar}, but we use the
secant line between \(0\) and \(1\), rather than the one between \(1\) and \(2\).
Let \(F\) satisfy the assumptions of Theorem~\ref{th:chen}, and assume in
addition that
\[
\mathbb E e^{F(G)}=1 .
\]
Recall that
\[
\Psi(\lambda)
=
\frac1\lambda \log \mathbb E e^{\lambda F(G)}
\]
is convex, with continuous extension
\[
\Psi(0)=\mathbb E F(G).
\]
Moreover,
\[
\Psi'(0)
=
\frac12\operatorname{Var}(F(G)).
\]
By convexity,
\[
\Psi'(0)
\leq
\Psi(1)-\Psi(0).
\]
Since \(\mathbb E e^{F(G)}=1\), we have \(\Psi(1)=0\), and therefore
\[
\frac12\operatorname{Var}(F(G))
\leq
-\Psi(0)
=
-\mathbb E F(G).
\]

We apply this inequality to the centered free energy
\[
F(G)
=
F_N(\beta)-\log\mathbb E Z_N(\beta).
\]
The assumptions of Theorem~\ref{th:chen} are satisfied for the SK model, as
explained in Section~\ref{SubSect:AppSK}. We obtain
\[
\frac12\operatorname{Var}(F_N(\beta))
\leq
\log\mathbb E Z_N(\beta)-\mathbb E F_N(\beta).
\]
Thus
\begin{equation}\label{eq:var-aq-gap}
\operatorname{Var}(F_N(\beta))
\leq
2\left(
\log\mathbb E Z_N(\beta)-\mathbb E F_N(\beta)
\right).
\end{equation}

It remains to express the annealed--quenched gap in terms of the usual overlap.
By Gaussian integration by parts,
\[
\frac{d}{d\beta}\mathbb E F_N(\beta)
=
\mathbb E\langle H_N(\sigma)\rangle_\beta
=
\frac{\beta N}{2}
\left(
1-\mathbb E\langle R_{12}^2\rangle_\beta
\right).
\]
On the other hand, since
\[
\log\mathbb E Z_N(\beta)
=
\frac{\beta^2}{4}(N-1),
\]
we have
\[
\frac{d}{d\beta}
\left(
\log\mathbb E Z_N(\beta)-\mathbb E F_N(\beta)
\right)
=
\frac{\beta N}{2}
\left(
\mathbb E\langle R_{12}^2\rangle_\beta-\frac1N
\right).
\]
Since the annealed--quenched gap vanishes at \(\beta=0\), integration gives
\[
\log\mathbb E Z_N(\beta)-\mathbb E F_N(\beta)
=
\frac N2
\int_0^\beta
s\left(
\mathbb E\langle R_{12}^2\rangle_s-\frac1N
\right)\,ds .
\]
Combining this identity with \eqref{eq:var-aq-gap}, we obtain
\[
\operatorname{Var}(F_N(\beta))
\leq
N\int_0^\beta
s\left(
\mathbb E\langle R_{12}^2\rangle_s-\frac1N
\right)\,ds,
\]
which proves Proposition~\ref{prop:variance-overlap}.
\end{proof}

\vspace{0.2cm}
\noindent\textbf{Acknowledgements.}
The author is grateful to Christian Brennecke for his encouragement, support,
and many helpful discussions related to this line of research. He also thanks
Loucas Pillaud-Vivien, Nicola Kistler, and Ella Hiesmayr for stimulating
conversations, and Jean-Christophe Mourrat for helpful discussions and valuable
comments on an earlier version of this note. The author was supported by the ERC
MSCA grant SLOHD (101203974).


\end{document}